\documentclass[12pt]{amsart}

\usepackage[english]{babel}
\usepackage[utf8x]{inputenc}
\usepackage{textcomp}
\DeclareUnicodeCharacter{2217}{\textasteriskcentered}

\usepackage[T1]{fontenc}

\usepackage[margin=1.5in]{geometry}

\usepackage{amsmath}
\usepackage{amstext}
\usepackage{amsfonts}
\usepackage{amssymb}
\usepackage{amsthm}
\usepackage{amsrefs}
\usepackage{mathtools}
\usepackage{dsfont}
\usepackage{color}
\usepackage{graphicx}
\usepackage[colorinlistoftodos]{todonotes}
\usepackage[colorlinks=true, allcolors=blue]{hyperref}
\usepackage{float}
\usepackage{mathrsfs}
\allowdisplaybreaks

\newtheorem*{cor*}{Corollary}
\newtheorem*{thm*}{Theorem}
\newtheorem*{lem*}{Lemma}

\newtheorem*{prop*}{Proposition}
\newtheorem*{rem*}{Remark}

\newtheorem{theo}{Theorem}

\newtheorem{coro}{Corollary}

\theoremstyle{definition}

\newtheorem{rem}{Remark}
\newtheorem*{defn*}{Definition}

\theoremstyle{remark}

\newcommand{\pr}{\operatorname{Prob}}

\newcommand{\act}{\curvearrowright}

\newcommand{\cS}{\mathcal{S}}

\newcommand{\bE}{{\mathbb{E}}}

\newcommand{\bN}{{\mathbb{N}}}

\newcommand{\vf}{{\varphi}}

\newcommand{\Sub}{\operatorname{Sub}}

\usepackage[nodisplayskipstretch]{setspace}

\allowdisplaybreaks

\title[Y. HARTMAN AND M. KALANTAR] 
{Stabilizer Subgroups and the Simplicity of Reduced Crossed Products}

\author[Y. Hartman]{Yair Hartman}
\address{Yair Hartman\\ Ben-Gurion University of the Negev}
\email{hartmany@bgu.ac.il}

\author[M. Kalantar]{Mehrdad Kalantar}
\address{Mehrdad Kalantar\\ University of Oxford}
\email{mehrdad.kalantar@maths.ox.ac.uk}

\begin{document}

\begin{abstract}
Given a minimal action $G\act X$ of a countable group $G$ on a compact space $X$, we prove that if the reduced crossed product $G\ltimes_rC(X)$ is simple, then there exists a point whose stabilizer subgroup has trivial amenable radical. As a consequence, we give a complete characterization of the simplicity of the reduced crossed product of minimal actions of countable linear groups, hyperbolic groups, and, more generally, for groups with countably many amenable subgroups. This answers a question of Ozawa (2014) for these classes of groups.
Furthermore, in the case of an infinite uniformly recurrent subgroup of a $C^*$-simple group, we prove that almost every subgroup has a trivial amenable radical, with respect to a fully supported, atomless probability measure.
\end{abstract} 

\thanks{{\ }\\[-3ex]YH was supported by the funding from the European Research Council (ERC) under the European Union's Seventh Framework Programme (FP7-2007-2013) (Grant agreement
No. 101078193).  MK was supported by the NSF grant DMS-2155162, and The Simons Foundation grant SFI-MPS-TSM-00014307.\\
For the purpose of open access, the authors have applied a CC-BY license to any author accepted manuscript arising from this submission.}

\maketitle



The crossed product $C^*$-algebras associated with dynamical systems were among the very first constructed examples of operator algebras. Since their introduction, understanding the relationship between the dynamical properties of an action and the algebraic structure of the resulting crossed product has remained a central problem in the field.

In particular, an old problem in $C^*$-algebra theory, which is the main focus of this work, is to determine when the reduced crossed product of a minimal action is simple, i.e.\ has no non-zero proper closed ideals.

More precisely, let $G$ be a countable group, $X$ a compact Hausdorff space, and $G\act X$ by homeomorphisms. We are concerned with the question of finding necessary and sufficient conditions, in terms of the stabilizer subgroups, for the 
reduced crossed product $G\ltimes_rC(X)$ to be simple.

In~\cites{Ell80} Elliott proved that $G\ltimes_rC(X)$ is simple if $G\act X$ is topologically free, that is, the set of points with trivial stabilizer is dense. For a minimal action, this is equivalent to the existence of one point with a trivial stabilizer. 
This improved earlier results of Effros--Hahn~\cite{effros1967locally}.
Furthermore, for amenable groups, topological freeness was proved to characterize simplicity of the reduced crossed product by Kawamura and Tomiyama~\cites{KawTom} (see also Archbold--Spielberg~\cite{ArSp} along with Anantharaman-Delaroche~\cite{anantharaman1987systemes} for the setup of topological amenable actions).

However, the above characterization fails miserably for general actions of non-amenable groups. For example, the trivial action of the free group $\mathbb{F}_2$, which is as far as can be from being free, yields the simple reduced $C^*$-algebra of $\mathbb F _2$~\cite{Pow75}.

In~\cite{BKKO}, it was proven that for a boundary action $G\act X$, if $G_x$ is $C^*$-simple for some $x\in X$, then $G\ltimes_rC(X)$ is simple. Ozawa noted in~\cite{Oz} that the same holds for any minimal action $G\act X$, and then asked whether the converse is true: 
\begin{center}
\textit{Does simplicity of $G\ltimes_rC(X)$ imply $C^*$-simplicity of stabilizer subgroups?}
\end{center}
The affirmative answer to this question would consequently give a complete characterization of the simplicity of $G\ltimes_rC(X)$ in terms of the point stabilizers.

In this direction, it was proven in~\cite{Oz} and~\cite{Kaw17} that if $G\ltimes_rC(X)$ is simple, then the neighborhood stabilizer subgroups $G_x^o$ consisting of group elements that pointwise fix an open neighborhood of $x$, are either all trivial or all non-amenable.

While the problem of characterizing simplicity in terms of point stabilizers is formulated in topological terms, our strategy relies on ``state-theoretic'' and probabilistic techniques. We use the non-commutative stationary theory developed by the authors in~\cite{HK1}, and its generalization by Amrutam-Ursu~\cite{AmrUrs}, which provides a characterization of simplicity in terms of stationary states.

Applying this stationary approach to the crossed product yields Theorem~\ref{main}, while its application to the reduced group $C^*$-algebra yields Theorem~\ref{thm:B}.
\\[1ex]
The following is our main result.
\begin{theo}\label{main}
Let $G$ be a countable group, $X$ a compact space, and $G\act X$ by homeomorphisms. If $G\ltimes_rC(X)$ is simple, then there exists a point $x_0\in X$ such that the stabilizer subgroup $G_{x_0} $ has trivial amenable radical.
\end{theo}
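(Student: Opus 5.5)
We will argue by contradiction, using the stationary characterization of simplicity from \cite{HK1} as generalized by Amrutam--Ursu \cite{AmrUrs}. Fix a generating probability measure $\mu$ on $G$ with full support. The characterization we rely on says: $G\ltimes_rC(X)$ is simple if and only if every $\mu$-stationary state on $G\ltimes_rC(X)$ whose restriction to $C(X)$ is a suitable ($\mu$-boundary-type) stationary measure can be pushed, by taking weak$^*$ limits of convolution averages $\mu^{n}*\tau$, towards the canonical conditional-expectation state $\nu\circ\mathbb E$. Assume, to the contrary, that $\mathrm{Rad}(G_x)\neq\{e\}$ for every $x\in X$. The plan is to build a $\mu$-stationary state on the crossed product that is not of the form $\nu\circ\mathbb E$ and that survives averaging, which contradicts simplicity.

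\emph{Step 1: a measurable choice of amenable normal subgroups.} The map $x\mapsto \mathrm{Rad}(G_x)\in \Sub(G)$ is Borel; this should follow by writing the amenable radical as the union of the amenable normal subgroups, which is countable data since $G$ is countable. Fix a $\mu$-stationary probability measure $\nu$ on $X$. For $\nu$-a.e.\ $x$ the subgroup $N_x:=\mathrm{Rad}(G_x)$ is nontrivial, amenable, and equivariant, i.e.\ $N_{gx}=gN_xg^{-1}$.

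\emph{Step 2: the state.} Define $\tau$ on $G\ltimes_rC(X)$ by
\[
\tau(f\lambda_g)=\int_X f(x)\,\mathds 1_{N_x}(g)\,d\nu(x).
\]
This is the integral of the states $\delta_x\otimes \mathds 1_{N_x}$. Since $N_x\le G_x$ and $N_x$ is amenable, the function $\mathds 1_{N_x}$ is positive definite and weakly contained in the regular representation, so each of these states extends to the reduced crossed product. By equivariance, $\tau$ is $\mu$-stationary. Because $N_x\ne\{e\}$ on a set of positive measure, $\tau\neq \nu\circ\mathbb E$.

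\emph{Step 3: the main obstacle.} We must show that $\tau$ genuinely witnesses non-simplicity. The Amrutam--Ursu criterion, as we intend to use it, requires $\nu\circ\mathbb E$ to lie in the weak$^*$-closed convex hull of $\{g\tau\}$ in an appropriate sense. So we need an invariant that is preserved under the $G$-action and under convex combinations, but that distinguishes $\tau$ from $\nu\circ\mathbb E$. Our first attempt will use normality of $N_x$ in $G_x$. Pick $h\in G$ and a set $A$ of positive measure on which $h\in N_x$. Conjugation keeps $h^{g}$ in $N_{gx}$, so every translate $g\tau$ gives the elements $\lambda_{h'}$ with $h'\in N_x$ value $1$ against the conditional measure. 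The difficulty is that convex averaging spreads this mass over different group elements. To handle it, we would consider the ideal generated by the kernel of the GNS representation of $\tau$, i.e.\ the induced representation $\int^{\oplus}\mathrm{Ind}_{N_x}$-type quasi-regular representation $\ell^2(G/N_x)$ over $\nu$. We would then show that this representation is not faithful on $G\ltimes_rC(X)$ by exhibiting an element that it kills. A candidate is $f(\lambda_h-1)$ with $f$ supported where $h\in N_x$, modified through the amenability of $N_x$ (averages over F{\o}lner sets of $N_x$) so that it is nonzero in the reduced algebra. Making this element genuinely nonzero in the reduced norm while killed by the quasi-regular representation is where the work lies. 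If the direct construction fails, we would fall back on a Zorn/compactness argument: take a minimal closed $G$-invariant set of pairs $(x,H)$ with $H\le \mathrm{Rad}(G_x)$ nontrivial in the Chabauty topology. Such a uniformly-recurrent-type family would be handled as in the Kennedy-type argument that a nontrivial amenable URS obstructs simplicity.
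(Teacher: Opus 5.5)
There is a genuine gap, and it is where you put it: Step~3 contains no argument, because the stationary criterion you invoke is misstated. For a $\mu$-stationary $\tau$ one has $\mu^n*\tau=\tau$, so ``pushing $\tau$ by convolution averages'' does nothing. The criterion has to be a rigidity statement. With a classical probability measure on $G$, or with weak$^*$-closed convex hulls of group translates, that rigidity is false for crossed products.

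Here is a counterexample. Take the simple algebra $\mathbb{Z}\ltimes_r C(\mathbb{T})$ of an irrational rotation, with $u(z)=z$. Let $E$ be the conditional expectation onto $C^*(\lambda_1)$ obtained by averaging the gauge automorphisms $u\mapsto wu$, $\lambda_1\mapsto\lambda_1$. Then $\mathrm{ev}_z\circ E$ is $\mathrm{Ad}\,\lambda_n$-invariant for all $n$. Hence it is stationary for every $\mu\in\pr(\mathbb{Z})$, and all its translates coincide with it. Yet it takes the value $z\neq0$ at $\lambda_1$, so it is not of the form $\nu\circ\mathbb{E}$.

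Your fallbacks do not repair this. The sets $\{x: h\in R_a(G_x)\}$ are Borel but not closed in general: a Chabauty limit of $G_{x_n}$ is only a subgroup of $G_x$, and its radical need not lie in $R_a(G_x)$. So there need not be any nonzero continuous $f$ supported in them. Likewise, ``$H\le R_a(G_x)$, $H\neq\{e\}$'' is not a closed condition in $X\times\Sub(G)$, since nontrivial subgroups can converge to $\{e\}$. The known topological results in this direction (Ozawa, Kawabe) concern the open stabilizers $G_x^o$, not $G_x$.

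The missing ingredient is the actual Amrutam--Ursu result (their Corollary~1.4). If $G\ltimes_rC(X)$ is simple, there is a $C(X)$-\emph{valued} probability measure $\mu=\sum_s\sum_{i\in I_s}\vf_i\delta_s$ with the following property: \emph{every} state $\eta$ satisfying $\eta\big(\sum_s\sum_i\sqrt{\vf_i}\,s\cdot a\,\sqrt{\vf_i}\big)=\eta(a)$ equals $\eta|_{C(X)}\circ\mathbb{E}$. The multiplication by the $\sqrt{\vf_i}$ is exactly what rules out states like the one above.

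With this, your Steps~1--2 finish the proof directly, with no contradiction hypothesis and no GNS, F{\o}lner or URS argument. Take $\nu\in\pr(X)$ fixed by the Amrutam--Ursu convolution, and set $\tau=\int_X\eta_x\,d\nu(x)$ with $\eta_x(\lambda_g\psi)=\psi(x)\mathds{1}_{R_a(G_x)}(g)$. The observation you need for stationarity with respect to this $\mu$ is that $C(X)$ lies in the multiplicative domain of each $\eta_x$. Thus $\eta_x(\sqrt{\vf}\,a\,\sqrt{\vf})=\vf(x)\eta_x(a)$. Combined with $R_a(G_{sx})=sR_a(G_x)s^{-1}$, this turns $\mu*\tau(\lambda_g\psi)$ into $\int_X\psi(x)\mathds{1}_{R_a(G_x)}(g)\,d(\mu*\nu)(x)=\tau(\lambda_g\psi)$. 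Hence $\tau=\nu\circ\mathbb{E}$, so $\nu(\{x: g\in R_a(G_x)\})=\tau(\lambda_g)=0$ for every $g\neq e$. Countability of $G$ then gives $R_a(G_x)=\{e\}$ for $\nu$-a.e.\ $x$.

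Finally, ``countable data'' in Step~1 is not a proof of measurability. The paper shows that $\{H\in\Sub(G): g\in R_a(H)\}$ is Chabauty-closed: a limit of the $R_a(H_n)$ is amenable because $\Sub_{am}(G)$ is closed, and it is normal in the limit of the $H_n$. It then composes with the Borel map $x\mapsto G_x$.
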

Recall that every $C^*$-simple group has trivial amenable radical (\cite{KK,BKKO}), but the converse is not true in general~\cite{LeBoud}.
However, for large classes of groups, including all linear groups, hyperbolic groups, and, more generally,  groups with countably many amenable subgroups, these properties are equivalent (see e.g.~\cite[Theorem 6.12]{BKKO}).

In particular, Theorem~\ref{main} yields a complete characterization of simplicity of the reduced crossed product $C^*$-algebras associated to minimal actions of these classes of groups. This answers the question of Ozawa in these cases.

\begin{coro}\label{cor:lin}
Let $G$ be a countable group that is either linear or has countably many amenable subgroups (e.g., a hyperbolic group), and let $G\act X$ be a minimal action. Then 
$G\ltimes_rC(X)$ is simple
if and only if there exists a point $x_0\in X$ such that the stabilizer subgroup $G_{x_0}$ is $C^*$-simple.
\end{coro}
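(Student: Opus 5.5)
The forward direction will come from Theorem~\ref{main} combined with the fact that, for the two classes in the statement, trivial amenable radical is equivalent to $C^*$-simplicity, and that both classes are closed under passing to subgroups. The backward direction is Ozawa's observation recalled in the introduction.

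\emph{($\Leftarrow$)} Suppose $G_{x_0}$ is $C^*$-simple for some $x_0\in X$. Since $G\act X$ is minimal, Ozawa's observation in~\cite{Oz}, extending the boundary-action result of~\cite{BKKO}, gives that $G\ltimes_rC(X)$ is simple. This direction uses no hypothesis on $G$.

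\emph{($\Rightarrow$)} Suppose $G\ltimes_rC(X)$ is simple. By Theorem~\ref{main} there is a point $x_0\in X$ whose stabilizer $H:=G_{x_0}$ has trivial amenable radical; this step does not even use minimality. It remains to see that $H$ is $C^*$-simple. To this end I would check that $H$ lies in the same class as $G$.
\begin{itemize}
\item If $G\le GL_n(K)$ for some field $K$, then $H\le GL_n(K)$ as well, so $H$ is linear.
\item If $G$ has only countably many amenable subgroups, then every amenable subgroup of $H$ is an amenable subgroup of $G$. Hence $H$ also has only countably many amenable subgroups.
\end{itemize}
For countable linear groups, and for countable groups with countably many amenable subgroups, trivial amenable radical is equivalent to $C^*$-simplicity by~\cite[Theorem 6.12]{BKKO}. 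Hence $H=G_{x_0}$ is $C^*$-simple.

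For the parenthetical example, one checks that a hyperbolic group has countably many amenable subgroups. Its amenable subgroups are virtually cyclic, hence finitely generated, and a countable group has only countably many finitely generated subgroups.

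The only point needing care is that the cited equivalence in~\cite[Theorem 6.12]{BKKO} is applied to the stabilizer $H$ rather than to $G$. That is exactly why hereditariness of the two classes is checked. Both checks are immediate, so I do not expect a genuine obstacle: all the depth lies in Theorem~\ref{main}.
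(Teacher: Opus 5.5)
Your proposal is correct and follows essentially the same route as the paper: the backward direction is Ozawa's observation from \cite{Oz}, and the forward direction combines Theorem~\ref{main} with the equivalence, from \cite{BKKO}, between trivial amenable radical and $C^*$-simplicity, applied to the stabilizer $G_{x_0}$. Your explicit check that both classes are closed under taking subgroups is exactly what the paper's phrase ``for all their subgroups'' implicitly relies on.
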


\begin{rem}
Note that in the statement of Corollary~\ref{cor:lin}, we may replace the stabilizers with open stabilizers.  Indeed, note that if for some $x\in X$, the stabilizer $G_x$ is $C^*$-simple, then $G^o_x$ is also $C^*$-simple, since it is a normal subgroup of $G_x$. Conversely, it was proven in~\cite{Kaw17} that if there exists some point $x\in X$ whose open stabilizer $G^o_x$ is $C^*$-simple, then the reduced crossed product is simple.
\end{rem}


Before we get into the proofs of the above results, let us briefly recall some definitions and fix our notation.

We denote by ${\rm Sub}(G)$ the Chabauty space of $G$, that is, the set of all subgroups of $G$, 
equipped with the product topology. This is a compact $G$-space, where $G\act {\rm Sub}(G)$ by conjugations.
We denote by ${\rm Sub}_{am}(G)$ the subset of all amenable subgroups of $G$, which is closed in ${\rm Sub}(G)$.
For $H\le G$ we denote by $R_{a}(H)$ the amenable radical of $H$. 


We recall the notion of $C(X)$-valued probability measures as defined and studied by Amrutam--Ursu in \cite{AmrUrs}.
Given a compact space $X$, denote by $C(X)^+$ the set of all non-zero positive functions $f\in C(X)$. 
By a \emph{$C(X)$-valued probability measure} on $G$ we mean a map $\mu:G\ni s\mapsto \mu(s)\subset C(X)^+$ such that $\sum_{s\in G}\sum_{\vf\in \mu(s)} \vf(x) = 1$ for every $x\in X$.

We may formally represent $\mu=\sum_{s\in G}\sum_{i\in I_s}\vf_i\delta_s$, where $I_s$ is the index set such that $\mu(s)=\{\vf_i : i\in I_s\}$.

If $G\act X$, then for a state $\eta\in\cS(G\ltimes_rC(X))$, the convolution $\mu*\eta\in \cS(G\ltimes_rC(X))$ is defined by
\[
\mu*\eta(a) = \eta\Big(\sum_{s\in G}\sum_{i\in I_s}\sqrt{\vf_i}\,s\!\cdot\! a\, \sqrt{\vf_i}\,\Big)
\]
for all $a\in G\ltimes_rC(X)$. The state $\eta\in \cS(G\ltimes_rC(X))$ is said to be $\mu$-stationary if $\mu*\eta=\eta$.

Note that in the case of the trivial $G$-action, the above notion coincides with $\mu$-stationary states on the reduced $C^*$-algebra $C^*_r(G)$ of $G$ as introduced and studied in~\cite{HK1}.
\\[1ex]
\noindent
{\it Proof of Theorem \ref{main}.}~
Let $X$ be a compact space, let $G\act X$ by homeomorphisms, and assume $G\ltimes_rC(X)$ is simple. First, note that this implies that $G\act X$ is a minimal action: indeed, a non-trivial invariant open subset gives a non-zero proper ideal of the crossed product.

For each $x\in X$, define $\eta_x$ to be the state on the reduced crossed product $G\ltimes_rC(X)$ such that  $\eta_x(\lambda_g \vf)= \vf(x) 1_{R_a(G_x)}(g)$ for $g\in G$ and $\vf\in C(X)$. Recall that $R_a(G_x)$ is the amenable radical of the stabilizer subgroup $G_x$. As
The reason this defines a state on the \textit{reduced }crossed product is thanks to the amenability of $R_a(G_x)$.

Fix some $g \in G$ and $\psi\in C(X)$. We claim that the map $x\mapsto \eta_x(\lambda_g \psi)$ is Borel. For this, it suffices to show that the set $\{H\in \Sub(G) : g\in R_a(H)\}$ is closed in the Chabauty topology, and then, since the stabilizer map $x\mapsto G_x$ is Borel, and $\psi$ is continuous, the claim follows.

So, let $(H_n)_{n\in\bN}$ be a sequence in $\Sub(G)$ with $g\in R_a(H_n)$ for all $n\in\bN$, and assume that $H_n\to H\in {\rm Sub}(G)$. By passing to a subsequence, we may also assume $R_a(H_n)\to N$ for some $N\le H$, and by the definition of the Chabauty topology, $g\in N$.
Since the set of amenable subgroups of $G$ is a closed subset of $\Sub(G)$~\cite{caprace2014relative}, it follows that $N$ is amenable as well. 

Furthermore, for any fixed $t\in N$ and $h\in H$, there is some $n_0\in\bN$ such that $t\in R_a(H_n)$ and $h\in H_n$ for all $n\ge n_0$. Then $h th^{-1}\in R_a(H_n)$ for all $n\ge n_0$, therefore $h th^{-1}\in N$, hence $N$ is normal in $H$. Since $N$ is amenable and normal we get that $N\le R_a(H)$, which implies that $g\in R_a(H)$. This concludes the proof that the set $\{H\in \Sub(G) : g\in R_a(H)\}$ is closed, and so implies that the map $x\mapsto \eta_x(\lambda_g \psi)$ is Borel, for fixed $g \in G$ and $\psi\in C(X)$.

Since $G\ltimes_rC(X)$ is simple, we get by~\cite[Corollary 1.4]{AmrUrs}, a $C(X)$-valued probability measure $\mu=\sum_{s\in G}\vf_s\delta_s$ such that every $\mu$-stationary state $\eta$ on $G\ltimes_rC(X)$ comes from a probability measure on $X$; that is, if we denote by $\bE:G\ltimes_rC(X)\to C(X)$ the canonical conditional expectation defined by $\bE(\lambda_t \vf)=0$ for every $\vf\in C(X)$ and $e\neq t\in G$, then every $\mu$-stationary state $\eta$ on $G\ltimes_rC(X)$ is of the form $\eta=\eta|_{C(X)}\circ \bE$.

Now, choose a $\mu$-stationary probability measure $\nu\in \pr(X)$. 
Notice that here we mean \textit{$\mu$-stationary} in the sense of \cite{AmrUrs}, as $\mu$ is not a random walk on the group, but a $C(X)$-valued probability measure. Since the map $x \mapsto \eta_x(\lambda_g \psi)$ is Borel for fixed $g\in G$ and $\psi\in C(X)$, we may define a state $\eta$ on $G\ltimes_rC(X)$ by $\eta=\int_X\eta_xd\nu(x)$.

Next, we show that $\eta$ is $\mu$-stationary. For that, it is enough to show that $\mu * \eta (\lambda_g \psi ) =\eta(\lambda_g \psi)$ for every $g\in G$ and $\psi\in C(X)$. Indeed,  fix such $g$ and $\psi$, then we have 
\[\begin{split}
\mu*\eta(\lambda_g \psi)
&=
\eta\big(\sum_{s\in G}\sum_{i\in I_s} \sqrt{\vf_i}\,\lambda_{s^{-1}gs}\, s\!\cdot\! \psi\, \sqrt{\vf_i}\,\big)\
\\&=
\int_X\sum_{s\in G}\sum_{i\in I_s} \eta_x( \sqrt{\vf_i}\,\lambda_{s^{-1}gs}\, s\!\cdot\! \psi\, \sqrt{\vf_i})d\nu(x)
\\&=
\int_X\sum_{s\in G}\sum_{i\in I_s} \vf_i(x) \psi(sx)1_{R_a(G_x)}(s^{-1}gs)d\nu(x)
\\&=
\int_X\sum_{s\in G}\sum_{i\in I_s} \vf_i(x) \psi(sx)1_{R_a(sG_xs^{-1})}(g)d\nu(x)
\\&=
\int_X\sum_{s\in G}\sum_{i\in I_s} \vf_i(x) \psi(sx)1_{R_a(G_{sx})}(g)d\nu(x)
\\&=
\int_X\psi(x)1_{R_a(G_x)}(g)\,d\mu*\nu(x)
\\&=
\int_X\psi(x)1_{R_a(G_x)}(g)\,d\nu(x)
\\&=
\int_X\eta_x(\lambda_g \psi)d\nu(x)
\\&=
\eta(\lambda_g \psi)\,,
\end{split}\]
which shows that $\eta$ is indeed $\mu$-stationary. 

Now, for every non-trivial $g\in G$ we have
\begin{equation}\label{eq}
\int_X 1_{R_a(G_{x})}(g)d\nu(x)= \int_X \eta_x(\lambda_g)d\nu(x)= \eta(\lambda_g) = \eta\circ\bE(\lambda_g) = 0 ,    
\end{equation}
which implies that $g\notin R_a(G_{x})$ for $\nu$-a.e.\ $x\in X$. Since $G$ is countable, it follows $R_a(G_{x})$ is trivial for $\nu$-a.e. $x\in X$, in particular, there exists some $x_0$ with $R_a(G_{x_0})=\{e\}.$
\qed

\medskip

\noindent
{\it Proof of Corollary \ref{cor:lin}.}~
The fact that the existence of a $C^*$-simple stabilizer subgroup implies the simplicity of the crossed product is proven in~\cite{Oz}.

If $G\ltimes_rC(X)$ is simple, then by Theorem~\ref{main}, there exists some $x_0\in X$ with 
$G_{x_0}$ having a trivial amenable radical. By \cite{BKKO}, the classes of groups in question have the property that for all their subgroups, having a trivial amenable radical is equivalent to being $C^*$-simple, which proves the claim.
 \qed

\subsection*{$C^*$-simple groups}
Recall that a URS $Z$ of the group $G$ is a minimal component of the Chabauty space ${\rm Sub}(G)$ (URS stands for Uniformly Recurrent Subgroups, see~\cite{GW-URS}).
Kennedy proved~\cite{KenURS} that $G$ is $C^*$-simple if and only if it has no non-trivial amenable URS, namely the only minimal component of $Sub_{am}(G)$ is the trivial $\{\{e\}\}$. See also~\cite{le2018subgroup}.

Our results, when applied to the case of $C^*$-simple groups, yield a strengthening of Kennedy's result as follows.

\begin{theo}\label{thm:B}
Let $G$ be a countable $C^*$-simple group, and let $Z$ be an infinite URS of $G$. Then there exists an uncountable dense set of subgroups $Y \subseteq Z$ such that $R_a(H)=\{e\}$ for every $H\in Y$.
\begin{proof}
The idea of the proof is the same as the proof of Theorem~\ref{main}, but working with the reduced $C^*$-algebra $C_r^*(G)$ instead of the reduced crossed product, and the stationary theory of states in $S(C_r^*(G))$ we developed in~\cite{HK1} instead of its generalization of Amrutam-Ursu~\cite{AmrUrs}.

Since $G$ is $C^*$-simple, we can fix a generating $C^*$-measure $\mu\in Prob(G)$ \footnote{$\bigcup_n supp( \mu^n) = G$} such that  
the canonical trace $\tau_0$ is the unique $\mu$-stationary state in $S(C_r^*(G))$. 
With respect to that $\mu$, we can find a $\mu$-stationary measure on $Z$, which we denote by $\nu\in \pr(Z)$. 

For each $H\in Z$, define $\eta_H\in S(C_r^*(G))$ by $\eta_H(\lambda_g)=1_{R_a(H)}(g)$. Note that the map $H\mapsto\eta_H$ is Borel and equivariant (it is a special case of the setup of the proof of Theorem~\ref{main}).

Now let $\eta\in S(C_r^*(G))$ be defined by $\eta=\int_Z\eta_Hd\nu(H)$. It is straightforward to verify that $\eta$ is a $\mu$-stationary state, and hence, $\eta=\tau_0$. Again, by the countability of $G$ we conclude that for $\nu$-almost every $H$, $R_a(H)=\{e\}$ (see Eq.~\ref{eq}). Since $Z$ is minimal, and $\nu$ is a $\mu$-stationary in the classical sense, it is an atomless measure of full support (as $\mu$ is a generating measure). We conclude that $Y$, the set of subgroups for which $R_a(H)=\{e\}$ holds, is uncountable and dense. 
\end{proof}
\end{theo}

\begin{rem}
By \cite[Corollary 1.2]{bon2020realizing} every URS Z is \textit{a stabilizer URS}, that is, there exists a minimal compact $G$-space $X$, such that $Z={\{G_x : x\in X\}}\subset \Sub(G)$.

Also, when $G$ is $C^*$-simple, the reduced crossed product of any minimal action is simple \cite[Theorem 7.1]{BKKO}.
Thus, applying Theorem~\ref{main} for this $X$, we get a point $x_0\in X$ whose stabilizer $G_{x_0}$ has trivial amenable radical.

In fact, the proof of Theorem~\ref{main} showed that having a trivial amenable radial stabilizer holds for $\nu$-a.e.\ $x\in X$ with respect to a $\mu$-stationary measure in the sense of Amrutam-Ursu. However, such $\nu$,
unlike classical stationary measures, can have atomic support. In particular, one cannot guarantee uncountably many points with this property from this argument, hence our direct proof of Theorem~\ref{thm:B}. 
\end{rem}

We conclude this section by highlighting the recent work \cite{CGS-stationary26}, which establishes intriguing results on stationary random subgroups of countable groups $G$, naturally related to URSs of $G$.

\medskip

\noindent
{\bf Acknowledgment.} We thank Tattwamasi Amrutam and Dan Ursu for their helpful comments.

 \bibliographystyle{plain} \bibliography{bib} 
\end{document}